\definecolor{mno}{rgb}{0.5,0.1,0.5}
\newcommand{\R}{\mathds R}
\newcommand{\Rd}{{\mathds R^d}}
\newcommand{\Pp}{\mathds P}
\newcommand{\Ee}{\mathds E}
\newcommand{\I}{\mathds 1}
\newcommand{\var}{\textmd{Var}}
\newcommand{\Bb}{\mathscr{B}}
\newtheorem{theorem}{Theorem}[section]
\newtheorem{proposition}[theorem]{Proposition}
\theoremstyle{definition}
\newtheorem{example}[theorem]{Example}
\newtheorem{remark}[theorem]{Remark}
\begin{document}

\title[Constructions of Coupling Processes for L\'{e}vy Processes]{\bfseries Constructions of Coupling Processes for L\'{e}vy Processes}

\author{Bj\"{o}rn\ B\"{o}ttcher\,\,\,\,\,\, Ren\'{e} L.\ Schilling\,\,\,\,\,\, Jian Wang}
\thanks{\emph{B.\ B\"{o}ttcher:} TU Dresden, Institut f\"{u}r Mathematische Stochastik, 01062 Dresden, Germany. \texttt{bjoern.boettcher@tu-dresden.de}}
\thanks{\emph{R.\ Schilling:} TU Dresden, Institut f\"{u}r Mathematische Stochastik, 01062 Dresden, Germany. \texttt{rene.schilling@tu-dresden.de}}
\thanks{\emph{J.\ Wang:}
School of Mathematics and Computer Science, Fujian Normal
University, 350007, Fuzhou, P.R. China \emph{and} TU Dresden,
Institut f\"{u}r Mathematische Stochastik, 01062 Dresden, Germany.
\texttt{jianwang@fjnu.edu.cn}}

\date{}

\maketitle

\begin{abstract}We
construct optimal Markov couplings of L\'{e}vy processes, whose
L\'evy (jump) measure has an absolutely continuous component. The construction is based on properties of subordinate
Brownian motions and the coupling of Brownian motions by reflection.

\medskip

\noindent\textbf{Keywords:} Coupling; L\'{e}vy process; subordinate
Brownian motion; Bernstein function

\medskip

\noindent \textbf{MSC 2010:} 60G51; 60G52; 60J25; 60J75.
\end{abstract}

\section{Introduction and Main Results}\label{section1}
It is well known that a L\'evy process $(X_t)_{t\geq 0}$ on $\Rd$
can be decomposed into three independent parts, i.e.\ the Gaussian
part, the drift part and the jump part. The corresponding symbol or characteristic exponent
(see \cite{JBOOK, SA}) of $X_t$ is given by
$$
    \psi(\xi)= \frac{1}{2}\langle Q\xi, \xi\rangle +i\langle b,\xi\rangle + \int_{z\neq 0}\!\! \Big( 1-e^{-i\langle \xi,z\rangle}+i\langle \xi,z\rangle\I_{\{|z|\le 1\}}\Big)\nu(dz),
$$
where $Q=(q_{j,k})_{j,k=1}^d$ is a positive semi-definite matrix,
$b\in\Rd$ is the drift vector and $\nu$ is the L\'evy or jump
measure; the L\'{e}vy measure $\nu$ is a $\sigma$-finite measure on
$\R^d\setminus\{0\}$ such that $\int_{z\neq 0}(1\wedge |z|^2)\nu(dz)<\infty$. If
the matrix $Q$ is strictly positive definite, regularity properties
for the semigroup of a L\'{e}vy process can be easily derived from
that of Brownian motion. However, when a L\'{e}vy process only has a
pure jump part (i.e.\ $Q=0$ and $\nu\neq0$), the situation is
completely different and, in general, more difficult to deal with. As a
continuation of our recent work \cite{RWJ}, we aim to construct
optimal Markov coupling processes of L\'{e}vy process $X_t$, by
assuming that the corresponding L\'{e}vy measure has absolutely
continuous lower bounds.

It has been proven in \cite[Theorem 3.1]{W1} and \cite[Theorem
1.1]{RWJ} that under some mild conditions compound Poisson processes admit successful
couplings, and the corresponding transition probability function satisfies
\begin{equation}\label{1proff111}\|P_t(x,\cdot)-P_t(y,\cdot)\|_{\var}\le
\frac{C(1+|x-y|)}{\sqrt{t}}\wedge 2\qquad\textrm{ for }t>0\textrm{
and }x, y\in\R^d,\end{equation} where
$\|\mu\|_{\var}$ denotes the total variation norm of the signed
measure $\mu$; moreover, the factor $\sqrt{t^{-1}}$ in
the inequality \eqref{1proff111} is sharp for $t>0$ large enough. The following question is natural: \emph{Is
the rate $\sqrt{t^{-1}}$ also optimal for general L\'{e}vy processes
that possess the coupling property}? Note that the L\'{e}vy measure
$\nu$ is always finite outside a neighborhood of $0$. Thus, the
behavior of $\nu$ around the origin will be crucial for optimal
estimates of $\|P_t(x,\cdot)-P_t(y,\cdot)\|_{\var}$ as $t$ tends to
infinity.

Before stating our main results, we first present some necessary
notations. A nonnegative function $f$ on $(0,\infty)$ is called a
\textit{Bernstein function} if $f\in C^\infty(0,\infty)$, $f\ge0$
and for all $k\ge1$, $(-1)^kf^{(k)}(x)\le 0.$ Any Bernstein function
$f$ has a L\'{e}vy-Khintchine representation
\begin{equation}\label{1proff0}f(\lambda)=a+b\lambda+\int_0^\infty(1-e^{-\lambda s})\mu(ds),\qquad \lambda>0,\end{equation} where $a$, $b>0$ and $\mu$
is a Radon measure on $(0,\infty)$ such that
$\int_0^\infty(s\wedge1)\mu(ds)<\infty.$  In particular, the
L\'{e}vy triplet $(a,b,\mu)$ determines the Bernstein function $f$
uniquely and vice versa, e.g.\ see \cite[Theorem 3.2]{RSW}.

\begin{theorem}\label{section1th1} Let $X_t$ be a L\'{e}vy process on $\R^d$ and $\nu$ be its L\'{e}vy measure. Assume that
\begin{equation}\label{1proff2}\nu(dz)\ge  |z|^{-d}f(|z|^{-2})dz,\end{equation} where
$f$ is a Bernstein function. Then, there is a constant $C>0$ such
that for $x$, $y\in\R^d$ and $t>0$,
\begin{equation}\label{1proff1}\|P_t(x,\cdot)-P_t(y,\cdot)\|_{\var}\le \bigg(\frac{|x-y|}{\sqrt{2}\pi}\int_0^\infty \frac{1}{\sqrt{r}}e^{-ctf(r)}dr\bigg)\wedge \frac{C(1+|x-y|)}{\sqrt{t}}\wedge 2,\end{equation}
 where  $c=\pi^{d/2}\cos 1\big/ (2d\Gamma(d/2+1))$. \end{theorem}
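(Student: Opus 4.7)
The plan is a coupling argument: produce a coupling $(X_t, X_t')$ of the L\'evy process started at $x$ and $y$ and apply the coupling inequality $\|P_t(x,\cdot) - P_t(y,\cdot)\|_{\var}\le 2\Pp(T>t)$ for the coupling time $T$. The third term of the minimum, $2$, is the trivial bound on the total-variation norm of a difference of probability measures, and the middle term $C(1+|x-y|)/\sqrt t$ can be imported from \cite[Theorem~3.1]{W1} and \cite[Theorem~1.1]{RWJ} (applied to a compound-Poisson part of $X_t$, e.g.\ the large-jump component coming from $\nu$ restricted to $\{|z|\ge 1\}$); the new content is the first term, which I would obtain by extracting a subordinate Brownian motion from $X_t$ and coupling its driving Brownian motion by reflection.

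The key preparatory step is an in-law decomposition $X_t = B_{S_t} + Z_t$, with $B$ a $d$-dimensional standard Brownian motion, $S$ an independent subordinator of Bernstein function $cf$, and $Z$ an independent L\'evy process. Writing the L\'evy triplet of $f$ as $(a,b,\mu)$, the L\'evy density of $B\circ S$ when $S$ has exponent $f$ equals $J_f(z) = \int_0^\infty (2\pi s)^{-d/2} e^{-|z|^2/(2s)}\mu(ds)$, and the key pointwise inequality
\[
c\, J_f(z)\le |z|^{-d}\, f(|z|^{-2}),\qquad z\ne 0,
\]
combined with the hypothesis $\nu(dz)\ge |z|^{-d}f(|z|^{-2})dz$, gives $\nu(dz)\ge cJ_f(z)\,dz$, which is equivalent to the claimed decomposition. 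Under the substitution $u = |z|^2/(2s)$ the comparison reduces to the scalar inequality $c\,\pi^{-d/2} u^{d/2} e^{-u}\le 1 - e^{-1/(2u)}$ for all $u>0$, and the sharp value $c = \pi^{d/2}\cos 1/(2d\Gamma(d/2+1))$ emerges from this optimisation: the factor $\pi^{d/2}/\Gamma(d/2+1)$ (the unit-ball volume in $\Rd$) reflects the $d$-dimensional Gaussian normalisation, while $\cos 1$ arises from a sharp trigonometric estimate at the optimising $u$.

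Given the decomposition, couple $B$ and $B'$ starting at $x$ and $y$ by reflection across the hyperplane orthogonal to $y-x$, share the subordinator $S$, and set $Z'=Z$; then $T = \inf\{t\ge 0: B_{S_t} = B'_{S_t}\} = \inf\{t\ge 0: S_t\ge \tau\}$, where $\tau$ is the first hitting time of $|x-y|/2$ by a standard one-dimensional Brownian motion $W$ (arising as the projection of $B-B'$ in direction $y-x$). The reflection-principle estimate $\Pp(\tau > s)\le |x-y|/\sqrt{2\pi s}$ (using $\mathrm{erf}(u)\le 2u/\sqrt\pi$), conditioning on $S_t$, and the Laplace identity $s^{-1/2} = \pi^{-1/2}\int_0^\infty r^{-1/2} e^{-rs}\,dr$ combined with $\Ee[e^{-rS_t}] = e^{-ctf(r)}$ then give
\[
\Pp(T>t)\le \frac{|x-y|}{\pi\sqrt 2}\int_0^\infty \frac{1}{\sqrt r}\, e^{-ctf(r)}\,dr,
\]
which yields the first term of \eqref{1proff1} via the coupling inequality. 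The main technical obstacle is the scalar pointwise comparison producing the sharp value of $c$; everything else --- L\'evy--It\^o decomposition, reflection coupling, and Laplace-transform inversion --- is standard.
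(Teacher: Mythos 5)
Your scaffolding --- extract a subordinate Brownian motion from $X_t$, couple its driving Brownian motion by reflection, convert the reflection estimate into a Laplace-transform bound, and fall back on the compound-Poisson bound and the trivial bound $2$ for the other two terms of the minimum --- matches the paper's. But the central decomposition step is genuinely different, and it is also where your argument has a real gap.

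The paper does \emph{not} perform a pointwise comparison of L\'evy densities. It invokes the argument of \cite[Theorem 1.1, Part (a)]{W2}, which is a Fourier-side (symbol) comparison: the splitting $X_t = X_t' + B_t^f$ holds whenever $\nu(dz)\ge c_0|z|^{-d}f(|z|^{-2})dz$ with $c_0 = \big(\int_{|z|\le1}(1-\cos z_1)|z|^{-d}dz\big)^{-1}$, because, after rotation and scaling, $\int(1-\cos\langle\xi,z\rangle)|z|^{-d}\I_{\{|z|\le|\xi|^{-1}\}}\,dz = c_0^{-1}$ for every $\xi$, which allows one to dominate the symbol $f(|\xi|^2)$ by the jump part of $\psi(\xi)$. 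The constant $c=\pi^{d/2}\cos 1/(2d\Gamma(d/2+1))$ in \eqref{1proff1} is then obtained as a \emph{lower bound} on $c_0^{-1}$, from $1-\cos z_1\ge\tfrac{\cos1}{2}z_1^2$ on $\{|z|\le 1\}$ followed by integration. Thus $\cos 1$ comes from bounding $1-\cos z_1$ from below on the unit ball; it has nothing to do with a Gaussian profile.

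Your pointwise route $cJ_f(z)\le|z|^{-d}f(|z|^{-2})$ is a legitimate alternative sufficient condition for the splitting, and your reduction to a scalar inequality is in principle correct (with the paper's convention $\Ee\,e^{i\xi(B_t-B_0)}=e^{-t|\xi|^2}$ the transition kernel is $(4\pi s)^{-d/2}e^{-|z|^2/(4s)}$, so the scalar inequality should read $c\,\pi^{-d/2}u^{d/2}e^{-u}\le 1-e^{-1/(4u)}$, not $1-e^{-1/(2u)}$). However, your account of the constant is incorrect: the infimum of $u\mapsto\pi^{d/2}u^{-d/2}e^{u}\big(1-e^{-1/(4u)}\big)$ does not involve $\cos 1$ --- there is no trigonometry in that expression, and for $d=1$ the infimum is roughly $1.0$ while $\cos 1\approx 0.54$. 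So there is no ``sharp trigonometric estimate at the optimising $u$.'' What is true is that the paper's $c$ is \emph{smaller} than that infimum, so your scalar inequality very likely does hold with the stated $c$, and if carried out your method would in fact produce a slightly better constant than the paper's. But this has to be verified directly; it does not ``emerge'' from the optimisation the way you describe. As written, the key comparison is asserted, not proved, and the claimed provenance of $\cos 1$ is wrong --- in the paper that constant arises exclusively from the Fourier-side estimate, which you have replaced by a density-side one.

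A minor difference worth noting: for the middle term $C(1+|x-y|)/\sqrt t$ the paper applies \cite[Theorem 4.3, Corollary 4.4]{RWJ} to the subordinate Brownian motion $B^f_t$ itself (its L\'evy measure is absolutely continuous), whereas you propose applying \cite{W1,RWJ} to the large-jump compound-Poisson part of $X$. Both work; the paper's choice avoids having to check that $\nu|_{\{|z|\ge 1\}}$ is nondegenerate.
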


\medskip

Since $\int \big (1\wedge|z|^2)\nu(dz)<\infty$, we have $\int
(1\wedge|z|^2) |z|^{-d}f(|z|^{-2})dz<\infty$. That is,
$$\int_0^1\frac{f(r)}{r}dr+\int_1^\infty \frac{f(r)}{r^2}dr<\infty,$$ which implies that the Bernstein function
$f$ in \eqref{1proff2} should be without drift and killing terms
(i.e.\ in the representation \eqref{1proff0} we have $a=b=0$). Based
on the coupling of random walks, we proved in \cite[Corollaries 4.2
and 4.4]{RWJ} that any L\'{e}vy process, which is either strong
Feller or whose L\'{e}vy measure has an absolutely continuous
component, has the coupling property and \eqref{1proff111} holds.
Thus, \eqref{1proff2} yields that \eqref{1proff111} is valid in our
setting. That is, the key and novel statement of Theorem
\ref{section1th1} is the first term on the right hand side of the
estimate \eqref{1proff1}. Note that for any $x$, $y\in\R^d$ and
$t\ge0$, $\|P_t(x,\cdot)-P_t(y,\cdot)\|_{\var}\le 2$, and
$\|P_t(x,\cdot)-P_t(y,\cdot)\|_{\var}$ is decreasing with respect to
$t$. Hence the asymptotic of
 $\|P_t(x,\cdot)-P_t(y,\cdot)\|_{\var}$ as $t\rightarrow\infty$
 is more interesting. Obviously, $\int_0^\infty
\frac{1}{\sqrt{r}}e^{-ctf(r)}dr<\infty$ for some constant $c>0$ and
$t>0$ large enough if
$\liminf\limits_{r\rightarrow\infty}\frac{f(r)}{\log r}>0.$ Indeed,
we have

\begin{proposition}\label{propo} Assume that condition \eqref{1proff2} holds. Then, for any
$x$, $y\in\R^d$, as $t\rightarrow\infty$,
\begin{equation}\label{propo1}\|P_t(x,\cdot)-P_t(y,\cdot)\|_{\var}= \begin{cases} \mathsf{O}\Big(\int_0^\infty
\frac{1}{\sqrt{r}}e^{-ctf(r)}dr\Big),& f'(0+)=\infty;\\
\qquad\mathsf{O}(t^{-1/2}),\quad& f'(0+)<\infty.\end{cases}
\end{equation}

 \end{proposition}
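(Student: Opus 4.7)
The proposition follows almost immediately from Theorem \ref{section1th1}, which furnishes, for fixed $x,y\in\R^d$, the upper bound
$$\|P_t(x,\cdot)-P_t(y,\cdot)\|_{\var}\le \frac{|x-y|}{\sqrt{2}\pi}\,I(t)\;\wedge\; \frac{C(1+|x-y|)}{\sqrt{t}},$$
where $I(t):=\int_0^\infty r^{-1/2}e^{-ctf(r)}\,dr$. The strategy is to pick whichever of the two bounds is asymptotically sharper in each regime, so the task reduces to comparing $I(t)$ with $t^{-1/2}$ as $t\to\infty$.

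For $f'(0+)<\infty$, the second term of the minimum already yields $\mathsf{O}(t^{-1/2})$, which is the claim. For internal consistency one can also check $I(t)=\mathsf{O}(t^{-1/2})$ directly: concavity of $f$ with $f(0)=0$ provides $c_0>0$ such that $f(r)\ge c_0 r$ on $[0,1]$, so $\int_0^1 r^{-1/2}e^{-ctf(r)}\,dr\le \sqrt{\pi/(ctc_0)}$, while the tail $\int_1^\infty r^{-1/2}e^{-ctf(r)}\,dr$ decays exponentially in $t$ after factoring $e^{-(ct-ct_0)f(1)}$ out of $e^{-ctf(r)}=e^{-(ct-ct_0)f(r)}e^{-ct_0f(r)}$ and invoking the finiteness of $I(t_0)$ for some $t_0$.

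For $f'(0+)=\infty$, the first term of the minimum is exactly the announced $\mathsf{O}(I(t))$. To justify that this is genuinely strictly sharper than $t^{-1/2}$, I would rescale via $u=tr$ in the ``small $r$'' part to get
$$\sqrt{t}\int_0^1 r^{-1/2}e^{-ctf(r)}\,dr = \int_0^t u^{-1/2}e^{-ctf(u/t)}\,du.$$
Concavity of $f$ with $f(0)=0$ gives $f(u/t)\ge (u/t)f(1)$ on $u\le t$, producing the $t$-independent integrable envelope $u^{-1/2}e^{-cuf(1)}$. Pointwise in $u>0$, $tf(u/t)=u\cdot f(u/t)/(u/t)\to\infty$, because $f(r)/r\to f'(0+)=\infty$ as $r\to 0$. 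Dominated convergence forces the right-hand side to $0$, and combining with the exponentially small contribution from $r\ge 1$ (handled as in the previous paragraph) one obtains $\sqrt{t}\,I(t)\to 0$.

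The argument is essentially routine given Theorem \ref{section1th1}; the only substantive technical ingredient is concavity of Bernstein functions, used both to bound $f$ from below by a linear function near $0$ and to supply a scale-invariant integrable envelope for the dominated-convergence step in the $f'(0+)=\infty$ case. The implicit growth of $f$ at infinity needed for $I(t)<\infty$ (flagged in the paragraph preceding the statement) enters only via the tail estimate.
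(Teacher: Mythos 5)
Your proposal is correct, and it correctly identifies that both stated asymptotics are immediate consequences of Theorem \ref{section1th1} (each is one term of the three-fold minimum in \eqref{1proff1}), so the substantive content lies in the supplementary analysis justifying the dichotomy. Here your route genuinely diverges from the paper's. The paper works probabilistically: writing $I(t)=\int_0^\infty r^{-1/2}e^{-ctf(r)}dr$ as $\Ee^S(\sqrt{2\pi}/\sqrt{S_t})$ for the subordinator $S$ with Bernstein function $f$ (with $c=1$ for simplicity), then applying Jensen's inequality and the identity $\Ee^S S_1=f'(0+)$ to get the \emph{lower} bound $I(t)\ge C_1/\sqrt t$ when $f'(0+)<\infty$; this shows the $I(t)$ bound cannot improve on $t^{-1/2}$ in that regime, so $\mathsf{O}(t^{-1/2})$ is the honest statement. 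You instead work analytically with the concavity of the Bernstein function: using $f(r)\ge rf(1)$ on $[0,1]$ to get the complementary \emph{upper} bound $I(t)\le C/\sqrt t$ when $f'(0+)<\infty$, and—more interestingly—rescaling $u=tr$ and invoking dominated convergence together with $f(r)/r\to f'(0+)=\infty$ to obtain $I(t)=o(t^{-1/2})$ when $f'(0+)=\infty$. The two analyses are complementary: the paper's lower bound explains why one cannot do better than $t^{-1/2}$ in the second case, while your $o(t^{-1/2})$ statement explains why the $I(t)$ bound is \emph{strictly} sharper in the first case, which the paper does not prove explicitly. Your argument is more elementary (no subordinator machinery, only concavity and DCT), at the cost of needing the implicit growth hypothesis $\liminf_{r\to\infty}f(r)/\log r>0$ to make $I(t)$ finite, which you correctly flag. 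Both proofs are sound; together, your upper bound and the paper's lower bound give $I(t)\asymp t^{-1/2}$ when $f'(0+)<\infty$.
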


 We will see from the next section that the assertion \eqref{propo1}
is sharp in many situations. Here we only present a typical example
to show the efficiency of Theorem \ref{section1th1}.

\begin{example}\label{th13}Assume that the L\'{e}vy measure satisfies
 $$\nu(dz)\ge c|z|^{-d-\alpha}dz$$ for $c>0$ and $\alpha\in(0,2)$. Then by Theorem \ref{section1th1}, for the associated L\'{e}vy process $X_t$, there exists a constant $C>0$ such that for any $x$,
 $y\in\R^d$ and $t>0$,
$$\|P_t(x,\cdot)-P_t(y,\cdot)\|_{\var}\le
\frac{C|x-y|}{t^{1/\alpha}}.$$
   \end{example}

\medskip

The main idea of the proof of Theorem \ref{section1th1} is to
construct coupling processes of subordinate Brownian motions, by
making full use of the coupling of Brownian motions by reflection. We will see in the next section that tail estimates for the
coupling time of those coupling processes heavily depend on the
decay of the associated Bernstein function $f(\lambda)$ as $\lambda\rightarrow0$. A number
of examples are also presented to illustrate the optimality of such
coupling processes for subordinate Brownian motions. The proofs and
some comments of Theorem \ref{section1th1} and Proposition
\ref{propo} are given in Section \ref{section3}.

\section{Couplings of Subordinate Brownian
Motions}\label{section2} In this section, we will study the
coupling property of a class of special but important L\'{e}vy
processes---subordinate Brownian motions. Examples of subordinate
Brownian motions include rotationally invariant stable L\'{e}vy processes,
relativistic stable L\'{e}vy processes and so on.

Suppose that $(B_t)_{t\ge0}$ is a Brownian motion on $\R^d$ with
$$\Ee\Big[e^{i\xi(B_t-B_0)}\Big]=e^{-t|\xi|^2},\qquad \xi\in\R^d,
t>0,$$ and $(S_t)_{t\ge0}$ is a subordinator (that is,
$(S_t)_{t\ge0}$ is a nonnegative L\'{e}vy process such that $S_t$ is
increasing and right-continuous in $t$ with $S_0=0$) independent of
$(B_t)_{t\ge0}$. For any $t\ge0$, let $\mu_t^S$ be the transition
probabilities of the subordinator $S$, i.e.\ $\mu_t^S(B)=\Pp(S_t\in
B)$ for any $B\in\Bb([0,\infty))$. It is well known that the
associated Laplace transformation of $\mu_t^S$ is given by
$$\int_0^\infty e^{-\lambda s}\mu_t^S(ds)=e^{-tf(\lambda)},\qquad \lambda>0,$$
where $f(\lambda)$ is a Bernstein function. We refer to \cite{RSW} for
more details about Bernstein functions and subordinators. Any
subordinate Brownian motion $(X_t)_{t\ge 0}$ defined by
$X_t=B_{S_t}$ is a symmetric L\'{e}vy process with
$$\Ee\Big[e^{i\xi(X_t-X_0)}\Big]=e^{-tf(|\xi|^2)},\qquad \xi\in\R^d,
t>0.$$ That is, the symbol or characteristic exponent of subordinate Brownian motion $X_t$ is
$f(|\xi|^2)$, see \cite{JBOOK}.

\medskip

Recall that the pair $(X_t,X'_t)$ is said to be a \emph{coupling of the Markov
process} $X_t$, if $(X'_t)_{t\geq 0}$ is a Markov process such that
it has same transition distribution as $(X_t)_{t\geq 0}$ but
possibly different initial distributions. In this case, $X_t$ and $X'_t$ are called the
\emph{marginal processes} of the coupling process, and the coupling time is
defined by $T:=\inf\{t\ge0: X_t=X'_t\}.$ The coupling $(X_t,X'_t)$ is
called \emph{successful} if $T$ is finite. A Markov process is said
to have successful couplings (or to have the coupling property) if for any two
initial distributions $\mu_1$ and $\mu_2$, there exists a successful
coupling with marginal processes starting from $\mu_1$ and $\mu_2$
respectively. In particular, according to \cite{Li} and the proof of
\cite[Theorem 4.1]{RWJ}, the coupling property is equivalent to the
statement that:
\begin{equation*}\label{prex1}\lim_{t\rightarrow\infty}\|P_t(x,\cdot)-P_t(y,\cdot)\|_{\var}=0\qquad\textrm{ for any }
x, y\in \R^d,\end{equation*} where $P_t(x,\cdot)$ is the transition
function of marginal process. A Markov coupling process is called
\emph{optimal} if it can give us sharp estimates of
$\|P_t(x,\cdot)-P_t(y,\cdot)\|_{\var}$ as $t$ tends to infinity. The
notion of optimal Markov coupling processes used here is different from
the one used by \cite[Definition 2.24]{CHEN}.

To construct an optimal Markov coupling process of subordinate
Brownian motion $(X_t)_{t\ge0}$, we begin with reviewing known facts
about the coupling of Brownian motions by reflection, see
\cite{LR,CL,HEP}. Fix $x$, $y\in\R^d$ with $x\neq y$. Let $B_t^x$ be
a Brownian motion on $\R^d$ $(d\ge1)$ starting from $x\in\R^d$, and
$H_{x,y}$ be the hyperplane such that the vector $x-y$ is normal
with respect to $H_{x,y}$ and $(x+y)/2\in H_{x,y}$, i.e.
$$H_{x,y}=\big\{u\in \R^d: \langle u- (x+y)/2, x-y\rangle=0\big\}.$$
Denote by $R_{x,y}: \R^d\rightarrow\R^d$ the reflection with
respect to the hyperplane $H_{x,y}$. Then, for every $z\in\R^d$,
$$R_{x,y}z=z-2\langle z-(x+y)/2, x-y\rangle(x-y)/|x-y|^2.$$ Define
$$\tau_{x,y}=\inf\big\{t>0: B^x_t\in H_{x,y}\big\}$$ and
$$\hat{B}_t^y:= \begin{cases} R_{x,y}B_t^x,& t\le\tau_{x,y};\\
\,\,\,\,B^x_t\,\,,& t>\tau_{x,y}.\end{cases}$$ That is,
$\hat{B}_t^y$ is the mirror reflection of $B_t^x$ with respect to
$H_{x,y}$
 before $\tau_{x,y}$ and coincides with $B_t^x$ afterwards. It is clear that $\hat{B}_t^y$ is a Brownian motion starting from $y$.
  Set $\tilde{B}^{x,y}_t:=(B_t^x,\hat{B}_t^y)$. Then,
  $\tilde{B}_t^{x,y}$ is a coupling of two Brownian motions starting from $x$, $y\in\R^{d}$ respectively. The coupling time
\begin{equation}\label{sub1}T_{x,y}^B:=\inf\{t>0: B_t^x=\hat{B}_t^y\}\end{equation} is just the stopping
time $\tau_{x,y}$. By \cite[Section 5, Page 170]{CL}, we have
\begin{equation}\label{sub2}\Pp(T_{x,y}^B>t)=\sqrt{\frac{2}{\pi}}\int_0^{|x-y|/(2\sqrt{2t})}\exp\big(\!\!-u^2/2\big)du\le
\frac{|x-y|}{2\sqrt{\pi t}}.\end{equation}
Note that $B_t$ here is just the usual standard Brownian motion but running at twice the speed, so the factor $\sqrt{2}$ appears in the upper bound of integral in \eqref{sub2}.

Next, let $(S_t)_{t\ge0}$ be a subordinator with $S_0=0$, which is
independent of $\tilde{B}_t^{x,y}$. Set
$$\tilde{X}^{x,y}_t=\tilde{B}^{x,y}_{S_t}=(B_{S_t}^x, \hat{B}_{S_t}^y).$$ Since $S_0=0$,
according to the definition of subordinate Brownian motion, we get
that $\tilde{X}^{x,y}_t$ is a coupling process of $X_t$ starting
from $x$ and $y$. For simplicity, let
$\tilde{X}^{x,y}_t:=(X_t^x,\hat{X}_t^{y}),$ and call
$\tilde{X}^{x,y}_t$ the \emph{reflection-subordinate coupling} of $X_t$.
Define the coupling time of $\tilde{X}^{x,y}_t$ as follows
\begin{equation}\label{sub3}T_{x,y}^X:=\inf\{t\ge0:X_t^x=\hat{X}_t^{y}\}.\end{equation} For any $x$, $y\in\R^d$, we will claim that $T_{x,y}^X<\infty$ almost surely. More precisely, we have

\begin{theorem}\label{th1} Let $X_t$ be a subordinate Brownian motion on $\R^d$ corresponding to the Bernstein function $f$, and
$P^f_t(x,\cdot)$ be its transition function. Then, $X_t$ has the coupling property; moreover, for any $t>0$ and $x$, $y\in\R^d$,
\begin{equation}\label{th11}\|P^f_t(x,\cdot)-P^f_t(y,\cdot)\|_{\var}\le 2\Pp(T_{x,y}^X>t)\le \frac{|x-y|}{\sqrt{2}\pi}\int_0^\infty \frac{1}{\sqrt{r}}e^{-tf(r)}dr.\end{equation}
 Additionally, assume that $\liminf\limits_{r\rightarrow\infty} f(r)/\log r>0$, $\liminf\limits_{r\rightarrow0}f(r)|\log r|<\infty$ and that $f^{-1}$ satisfies the following volume doubling property:
\begin{equation}\label{th1123}\limsup_{s\rightarrow0 }f^{-1}(2 s)/f^{-1}(s)<\infty.\end{equation}
Then, there exists a constant $C>0$ such that
 for $t>0$ sufficiently large \begin{equation}\label{th12}\|P^f_t(x,\cdot)-P^f_t(y,\cdot)\|_{\var}\le C|x-y|\sqrt{f^{^{-1}}\Big(\frac{1}{t}\Big)}.\end{equation}\end{theorem}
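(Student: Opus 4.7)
My plan is to establish \eqref{th11} first (which also gives the coupling property) and then derive \eqref{th12} from it via a careful analysis of the integral on the right-hand side.

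For \eqref{th11}, I would apply the standard coupling inequality $\|P^f_t(x,\cdot)-P^f_t(y,\cdot)\|_{\var}\le 2\Pp(T_{x,y}^X>t)$ to the reflection-subordinate coupling and then exploit the monotonicity of the subordinator: since $\hat{B}^y$ coincides with $B^x$ after the Brownian coupling time $T_{x,y}^B$, the definitions \eqref{sub1}, \eqref{sub3} and the fact that $S$ is increasing give $\{T_{x,y}^X>t\}=\{S_t<T_{x,y}^B\}$. Conditioning on $S_t$ and using the independence of $S$ and $\tilde{B}^{x,y}$ together with \eqref{sub2} yields
\[
\Pp(T_{x,y}^X>t)\le \frac{|x-y|}{2\sqrt{\pi}}\,\Ee\!\left[S_t^{-1/2}\right].
\]
The $\Gamma$-function identity $s^{-1/2}=\pi^{-1/2}\int_0^\infty r^{-1/2}e^{-rs}\,dr$, Fubini, and the Laplace transform $\Ee[e^{-rS_t}]=e^{-tf(r)}$ then convert this moment into the integral appearing in \eqref{th11}. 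The coupling property itself is automatic because $S_t\to\infty$ a.s.\ for any non-trivial subordinator and $T_{x,y}^B<\infty$ a.s.

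For \eqref{th12}, I would estimate $I(t):=\int_0^\infty r^{-1/2}e^{-tf(r)}dr$ by splitting at the scales $r_0:=f^{-1}(1/t)$ and a fixed threshold $R_0$ beyond which the logarithmic growth of $f$ takes over. On $(0,r_0]$ the trivial bound $e^{-tf(r)}\le 1$ gives a contribution $2\sqrt{r_0}=2\sqrt{f^{-1}(1/t)}$. On $[r_0,R_0]$, I would run a dyadic decomposition along $r_k:=2^k r_0$: the volume doubling hypothesis \eqref{th1123} is equivalent to $f(Dr)\ge 2f(r)$ at small scales, and iterating yields $tf(r_k)\ge 2^{k/\alpha-1}$ with $\alpha=\log_2 D$, so each dyadic block contributes at most $2^{k/2}\sqrt{r_0}\exp(-2^{k/\alpha-1})$, and the $k$-series converges because double-exponential decay dominates polynomial growth. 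On $[R_0,\infty)$, the hypothesis $\liminf_{r\to\infty}f(r)/\log r>0$ gives $f(r)\ge c\log r$, so the integrand is bounded by $r^{-1/2-ct}$, integrating to something exponentially small in $t$. Finally, $\liminf_{r\to 0}f(r)|\log r|<\infty$ combined with the doubling property produces a lower bound of the form $f^{-1}(1/t)\ge e^{-Ct}$ for large $t$, which ensures that the exponentially small tail is absorbed into $C\sqrt{f^{-1}(1/t)}$. Assembling the three pieces gives $I(t)\le C\sqrt{f^{-1}(1/t)}$, whence \eqref{th12}.

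The main obstacle is the second part, where one must simultaneously control three regimes and verify that the tail beyond $R_0$ is dominated by $\sqrt{f^{-1}(1/t)}$ rather than exceeding it. The delicate step is upgrading the \emph{liminf} condition $\liminf_{r\to 0}f(r)|\log r|<\infty$, which is a priori only a subsequential statement, into a pointwise lower bound on $f^{-1}(1/t)$ valid for all large $t$; the doubling property of $f^{-1}$ is exactly what is needed to propagate this bound from a subsequence to all small values of the argument, and this is the technically subtle step of the argument.
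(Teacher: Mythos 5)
Your proof of \eqref{th11} follows the paper's route exactly: identify $\{T_{x,y}^X>t\}$ with $\{S_t<T_{x,y}^B\}$ (the paper phrases this via the identity $T_{x,y}^X=\inf\{t\ge 0: S_t\ge T_{x,y}^B\}$, which it verifies carefully, but the content is the same), condition on $S_t$, apply \eqref{sub2}, and then use the $\Gamma(1/2)$ identity together with the Laplace transform of $\mu_t^S$ to produce the integral in $f$. The coupling property from $T_{x,y}^B<\infty$ a.s.\ and $S_t\to\infty$ a.s.\ is likewise identical.

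For \eqref{th12} your route is genuinely different in its bookkeeping, though the ingredients coincide. The paper first substitutes $s=f(r)$ and integrates by parts, obtaining $\int_0^\infty\sqrt{f^{-1}(s/t)}\,e^{-s}\,ds$ (the printed version drops a square root, but that is a typo), and then splits the $s$-integral at $s=1$ and $s=c_3t$; on the middle interval it derives a single polynomial estimate $f^{-1}(s/t)\le(2s)^\rho f^{-1}(1/t)$ from iterated doubling and integrates against $e^{-s}$. You instead keep the original variable $r$, split at $r_0=f^{-1}(1/t)$ and at a fixed $R_0$, and run a genuinely dyadic decomposition on the middle interval with $tf(r_k)\ge 2^{k/\alpha-1}$. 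After the paper's substitution the two cut points correspond to the same scales in $r$, so the decompositions match; the difference is only whether one changes variables and integrates by parts first, or works directly with the original integrand. Your version is perhaps slightly more elementary, as it avoids the integration-by-parts step, but you pay by summing a dyadic series where the paper integrates a convergent gamma-type integral. Either way, you need the doubling to hold on the whole middle block, which forces $R_0$ to be taken small enough that $f(R_0)$ lies in the doubling regime; this is implicit in your plan and explicit in the paper through the choice of $c_3$.

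On the one genuinely delicate step you have identified the right issue and the right tool: the condition $\liminf_{r\to 0}f(r)|\log r|<\infty$ gives a bound only along a subsequence $r_n\downarrow 0$, and passing to a pointwise bound $f(r)\le c_4/|\log r|$ (equivalently $f^{-1}(1/t)\ge e^{-c_4t}$) genuinely requires the doubling hypothesis; it is not a triviality. Concretely, with $f(c_1 r)\ge 2f(r)$ for small $r$, given $r$ small pick $r_n\ge r$ from the good subsequence and let $k\approx\log_{c_1}(r_n/r)$; then $f(r)\le 2^{-k}f(r_n)\le 2^{-k}M/|\log r_n|$, and since $2^{-k}=(r_n/r)^{-1/\log_2 c_1}$ decays polynomially in $r_n/r$ while $|\log r|/|\log r_n|=1+\log(r_n/r)/|\log r_n|$ grows only logarithmically, $f(r)|\log r|$ stays bounded. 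The paper states the pointwise bound rather abruptly and then invokes doubling again to adjust the exponent from $c_4$ to $c_3/2$; your write-up would do well to make the subsequence-to-pointwise upgrade explicit, since without doubling the inference fails, but your plan is sound.
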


\begin{remark}\rm (i) We mention that \emph{if there exists $c>0$ such that for $s>0$ small enough, $2f(s)\le f(cs)$,
then \eqref{th1123} holds}.
Indeed, suppose that there exists $c_0>0$ such that $2f(s)\le f(cs)$ holds for
all $s\in (0,c_0]$. By the monotonicity of $f$, $f^{-1}(2f(s))\le cs$.
That is, $\limsup_{s\rightarrow0 }{f^{-1}(2f(s))}/{s}\le c$.
Since $f^{-1}(s)\rightarrow0$ as $s\rightarrow0$, we have
$\limsup_{s\rightarrow0 }{f^{-1}(2s)}/{f^{-1}(s)}\le c$, and so \eqref{th1123} follows.

(ii) It is clear that if
$\liminf\limits_{r\rightarrow\infty}{f(r)}/{\log r}>0$, then
$\int_0^\infty \frac{1}{\sqrt{r}}e^{-tf(r)}dr<\infty$ for $t>0$
large enough. We will claim that the converse is
also true. Indeed, assume that $\int_0^\infty
\frac{1}{\sqrt{r}}e^{-t_0f(r)}dr<\infty$. Since
$r\mapsto\frac{1}{\sqrt{r}}e^{-t_0f(r)}$ is strictly decreasing on
$[0,\infty)$, by a standard Abelian argument,
there exist positive constants $r_0$ and $c$ such that for any $r\ge
r_0$,
$$\frac{1}{\sqrt{r}}e^{-t_0f(r)}\le\frac{c}{r}.$$ That is,
${f(r)}/{\log r}\ge {c}/({2t_0}).$ So,
$\liminf\limits_{r\rightarrow\infty}{f(r)}/{\log r}\ge
{c}/({2t_0}).$
   \end{remark}

Before we prove Theorem \ref{th1} we give some examples. Here, we
always suppose that $S$ is a subordinator corresponding to the
Bernstein function $f$, and $X$ is the associated subordinate
Brownian motion. Denote by $P^f_t(x,\cdot)$ the transition function
of $X$. For two non-negative functions $g$ and $h$, the notation
$g\asymp h$ means that there are two positive constants $c_1$ and
$c_2$ such that $c_1g\le h\le c_2g.$ An extensive list of examples
of Bernstein functions can be found in \cite[Chapter 15]{RSW}.

\begin{example}\label{stable} Consider $\alpha\in(0,2)$ and define
$$f(\lambda)=\lambda^{\alpha/2}.$$ The corresponding subordinate Brownian
motion $X_t$ is the rotationally invariant stable L\'{e}vy process with
index $\alpha$. In this case, for $t>0$ sufficiently large, the estimate \eqref{th12} becomes
\begin{equation}\label{stable1}\|P^f_t(x,\cdot)-P^f_t(y,\cdot)\|_{\var}\le \frac{C|x-y|}{t^{1/\alpha}}.\end{equation}

On the other hand, let $Z_t$ be a
rotationally invariant $\alpha$-stable process on $\R^d$ starting
from $0$. For any $x$, $y\in\R^d$ with $x<y$, i.e.\ $x_i<y_i$ for $1\le i\le d$,
$$\aligned\|P^f_t(x,\cdot)-P^f_t(y,\cdot)\|_{\var}\ge& \big|\Pp\big(Z_t+x\in \Pi_{i=1}^d  (x_i,\infty)\big)-\Pp\big(Z_t+y\in \Pi_{i=1}^d(x_i,\infty)\big)\big|\\
=&\big|\Pp\big(Z_t\in (0,\infty)^d\big)-\Pp\big(Z_t\in \Pi_{i=1}^d(x_i-y_i,\infty)\big)\big|\\
=&\Pp\big(Z_t\in\Pi_{i=1}^d(x_i-y_i,\infty)\setminus(0,\infty)^d\big)\\
\ge &\sum_{j=1}^d\Pp\big(Z_t\in(x_j-y_j,0]\times
(0,\infty)^{d-1}\big).\endaligned$$ Denote by $p_t$ the density
function of $Z_t$. It is well known, see e.g.\ \cite{ChenK1, BSS}, that
$$p_t(z)\asymp t^{-d/\alpha}\wedge\frac{t}{|z|^{d+\alpha}}.$$ Thus,
for any $t\ge(y_1-x_1)^\alpha$,
$$\aligned &\int_{(x_1-y_1,0]\times (0,\infty)^{d-1}}p_t(z)dz\\\ge& c_0\int_{(x_1-y_1,0]\times (0,\infty)^{d-1}}\bigg(t^{-d/\alpha}\wedge\frac{t}{|z|^{d+\alpha}}\bigg)dz\\
\ge& c_0\int_{\sum_{i=2}^dz_i^2\ge t^{2/\alpha}, z_i>0,\, i=2,\cdots,d}\int_{x_1-y_1}^0\Bigg[\frac{t}{\big((y_1-x_1)^2+\sum_{i=2}^dz_i^2\big)^{(d+\alpha)/2}}\Bigg]dz_1dz_2\cdots dz_d\\
\ge& c_1(y_1-x_1)t\int_{\sum_{i=2}^dz_i^2\ge t^{2/\alpha}, z_i>0,\, i=2,\cdots,d}\frac{1}{\big(\sum_{i=2}^dz_i^2\big)^{(d+\alpha)/2}}dz_2\cdots dz_d\\
=&c_2(y_1-x_1)t\int_{t^{1/\alpha}}^\infty \frac{1}{r^{2+\alpha}}dr\\
=&\frac{c_2(y_1-x_1)}{t^{1/\alpha}}.\endaligned$$ Therefore, for $t\ge \max_{i=1}^d(y_i-x_i)^\alpha$,
$$\aligned\|P^f_t(x,\cdot)-P^f_t(y,\cdot)\|_{\var}\ge& \sum_{j=1}^d\int_{(x_j-y_j,0]\times (0,\infty)^{d-1}}p_t(z)dz\\
\ge&\frac{c_2\sum_{j=1}^d(y_j-x_j)}{t^{1/\alpha}}\\
\ge&\frac{c_2|y-x|}{t^{1/\alpha}}.\endaligned$$This implies that \eqref{stable1} is sharp.
\end{example}

\begin{example} Consider $0<\alpha<\beta<2$ and define
$$f(\lambda)=\lambda^{\alpha/2}+\lambda^{\beta/2}.$$ The corresponding subordinate Brownian
motion $X_t$ is a mixture of rotationally invariant stable L\'{e}vy processes
with index $\alpha$ and $\beta$. For this example, for $t>0$ large
enough,
$$\sqrt{f^{^{-1}}\Big(\frac{1}{t}\Big)}\asymp
\frac{1}{t^{1/\alpha}},$$ and so
$$\|P^f_t(x,\cdot)-P^f_t(y,\cdot)\|_{\var}\le \frac{C|x-y|}{t^{1/\alpha}}.$$ That is, the degree of
decay of $\|P^f_t(x,\cdot)-P^f_t(y,\cdot)\|_{\var}$ (as $t$
tends to infinity) is determined by the smaller index $\alpha.$ One
also can see this assertion in the following way: Let $P_t^{(\alpha)}$
and $P_t^{(\beta)}$ be the semigroups corresponding to subordinate
Brownian motions with Bernstein functions $f^{(\alpha)}(r)=r^{\alpha/2}$
and $f^{(\beta)}(r)=r^{\beta/2}$, respectively. According to the proof
of Proposition \ref{couooo} below, we have
$$\|P^f_t(x,\cdot)-P^f_t(y,\cdot)\|_{\var}\le\|P_t^{(\alpha)}(x,\cdot)-P_t^{(\alpha)}(y,\cdot)\|_{\var}\wedge\|P_t^{(\beta)}(x,\cdot)-P_t^{(\beta)}(y,\cdot)\|_{\var}.$$
Then, the desired assertion follows from Example \ref{stable} above.

Recently it has been proven in \cite[Theorem
1.2]{ChenK} (see also \cite{CKS}) that the density function of
a mixture of rotationally invariant stable L\'{e}vy processes with index
$\alpha$ and $\beta$ satisfies
$$p(t,x,y)\asymp\big(t^{-d/\alpha}\wedge t^{-d/\beta}\big)\wedge\bigg(\frac{t}{|x-y|^{d+\alpha}}+\frac{t}{|x-y|^{d+\beta}}\bigg)$$
 on $(0,\infty)\times \R^d \times\R^d$. Since $\alpha<\beta$, for $t>0$ large enough, $t^{-d/\alpha}<
 t^{-d/\beta}$, and so there exists $c>0$ such that for $t>0$ large
 enough,
$$p(t,x,y)\ge c\bigg(t^{-d/\alpha}\wedge\frac{t}{|x-y|^{d+\alpha}}\bigg).$$ The
right hand side of the inequality above is just the sharp
estimate (up to a constant) of the density function of rotationally
invariant $\alpha$-stable L\'{e}vy process. This implies that for this
example our upper bound $t^{-1/\alpha}$ is optimal for
$t>0$ large enough, cf.\ also Example \ref{stable}.
\end{example}

The following two Bernstein functions are taken from \cite[Chapter
5.2.2; Examples 2.15 and 2.16]{SONGZ}.

\begin{example} Consider $0<\alpha<2$, $\beta\in(0,2-\alpha)$ and define
$$f(\lambda)=\lambda^{\alpha/2}{(\log(1+\lambda))^{\beta/2}}.$$
Noting that $f(\lambda)\asymp \lambda^{(\alpha+\beta)/2}$ as
$\lambda\rightarrow 0$, for the corresponding subordinate Brownian motion,
$$\|P^f_t(x,\cdot)-P^f_t(y,\cdot)\|_{\var}\le \frac{C|x-y|}{t^{1/(\alpha+\beta)}}\qquad\textrm{ for }t>0 \textrm{ large enough}.$$
\end{example}

\smallskip

\begin{example} Consider $0<\alpha<2$, $\beta\in(0,\alpha)$ and define
$$f(\lambda)=\lambda^{\alpha/2}{(\log(1+\lambda))^{-\beta/2}}.$$ Since $f(\lambda)\asymp \lambda^{(\alpha-\beta)/2}$ as
$\lambda\rightarrow 0$, we know that in this situation, for $t>0$
large enough,
$$\|P^f_t(x,\cdot)-P^f_t(y,\cdot)\|_{\var}\le \frac{C|x-y|}{t^{1/(\alpha-\beta)}}.$$
\end{example}

As we can see from \eqref{th12} and, in particular, by the four examples from
above, estimates about $\|P^f_t(x,\cdot)-P^f_t(y,\cdot)\|_{\var}$ depend
on the decay of $f(\lambda)$ as $\lambda$ tends to zero. Roughly
speaking, the smaller $f(\lambda)$ near zero, the larger is the upper
bound in \eqref{th12}. The following three examples further
illustrate this point. They also show that under the
reflection-subordinate coupling the coupling happens not necessarily faster than under (compound) Poissonian coupling,
 cf.\ \eqref{1proff111} and \cite[Theorem 1.1]{RWJ}).

\begin{example}\label{ex11} Consider $0<\alpha<2$, $m>0$ and define
$$f(\lambda)=(\lambda+m^{2/\alpha})^{\alpha/2}-m.$$ We state that
as $\lambda\rightarrow 0$, $f(\lambda)\asymp \lambda$. The
corresponding subordinate process is the relativistic stable L\'{e}vy process.
For $t>0$ large enough,
$$\|P^f_t(x,\cdot)-P^f_t(y,\cdot)\|_{\var}\le \frac{C|x-y|}{\sqrt{(m+t^{-1})^{2/\alpha}-m^{2/\alpha}}}\asymp\frac{C|x-y|}{\sqrt{t}}.$$
The estimate above is sharp for $t>0$ large enough. Indeed, for
$m=\alpha=d=1$, it can be shown that (e.g.\ see \cite{HS, CM} or
\cite[Example 2.4]{ChenK}) for every $t>0$ and $(x,y)\in \R \times
\R$, the density function of relativistic stable L\'{e}vy process satisfies
$$p(t,x,y)\ge \frac{c_1t}{(|x-y|+t)^{2}}\bigg(1\vee (|x-y|+t)^{1/2}\bigg)e^{-c_2\frac{|x-y|^2}{\sqrt{|x-y|^2+t^2}}}.$$ In particular, for $t>0$ large enough, we have
$$p(t,x,y)\ge \frac{c_1t}{(|x-y|+t)^{3/2}}e^{-c_2\frac{|x-y|^2}{\sqrt{|x-y|^2+t^2}}}.$$
Let $Z_t$ be a relativistic stable L\'{e}vy process with $m=\alpha=1$ on $\R$
starting from $0$. Then, for any $x$, $y\in\R$ with $x<y$ and $t>0$ large enough,
$$\aligned\|P^f_t(x,\cdot)-P^f_t(y,\cdot)\|_{\var}\ge& \big|\Pp\big(Z_t+x\in (x,\infty)\big)-\Pp\big(Z_t+y\in (x,\infty)\big)\big|\\
=& \big|\Pp\big(Z_t\in (0,\infty)\big)-\Pp\big(Z_t\in (x-y,\infty)\big)\big|\\
=&\Pp\big(Z_t\in (x-y,0]\big)=\int_{x-y}^0p(t,0,z)dz\\
\ge&c_1t\int_{x-y}^0
\frac{1}{(|u|+t)^{{3/2}}}e^{-c_2\frac{u^{2}}{\sqrt{u^2+t^2}}}du\\
\ge&
\frac{C_1(y-x)}{t^{1/2}}.\endaligned$$
\end{example}

\smallskip

\begin{example}\label{ex12} First, we consider $0<\alpha\le1$ and define
$$f(\lambda)=\log^{1/\alpha}(1+\lambda^\alpha),$$ which satisfies that $f(\lambda)\asymp \lambda$ as
$\lambda\rightarrow 0$. When $\alpha=1$, $S_t$ is called Gamma
subordinator. In this setting, for $t>0$ large enough,
$$\|P^f_t(x,\cdot)-P^f_t(y,\cdot)\|_{\var}\le
{C|x-y|}\Big(\exp({t^{-\alpha}})-1\Big)^{1/(2\alpha)}\asymp\frac{C|x-y|}{\sqrt{t}}.$$

On the other hand, we study the coupling property of rotationally
invariant geometric stable L\'{e}vy processes, which are subordinate
Brownian motions associated with the Bernstein function
$g(\lambda)=\log(1+\lambda^\alpha)$ and $0<\alpha\le 2$. One can see
that for these processes, when $t>0$ large enough,
$$\|P^g_t(x,\cdot)-P^g_t(y,\cdot)\|_{\var}\le \frac{C|x-y|}{t^{1/\alpha}}.$$
This assertion is the same as that for rotationally invariant stable
L\'{e}vy processes, but completely different from Brownian motions
subordinated with $f$. We furthermore point out that for
rotationally invariant geometric stable L\'{e}vy processes,
$g(\lambda)\asymp \lambda^\alpha$ as $\lambda\rightarrow 0$.
\end{example}

\smallskip

Now, we turn to the proof of Theorem \ref{th1}.

\begin{proof} [Proof of Theorem \ref{th1}] We follow the proof of \cite[Proposition
3.3]{RWJ} to verify that the relation between coupling times defined
by \eqref{sub1} and \eqref{sub3} is
\begin{equation}\label{sub4}T_{x,y}^X=\inf\big\{t\ge0:S_t\ge T_{x,y}^B\big\}.\end{equation}
In the following argument, assume that $\omega$ is fixed, and define
$K_{x,y}=\inf\big\{t\ge0:S_t\ge T_{x,y}^B\big\}.$ Let $t>0$ be such
that $S_t\ge T_{x,y}^B$, i.e.\ $t\geq K_{x,y}$. Since
$B^x_t=\hat{B}^y_t$ for $t\ge T_{x,y}^B$, we have
$B^x_{S_t}=\hat{B}^y_{S_t}$, and, by construction,
$X_t=\hat{X}_t$. That is, $T_{x,y}^X\le t$. Since $t\geq K_{x,y}$
was arbitrary, we have $T^X_{x,y}\le K_{x,y}$. On the other hand,
assume that $K_{x,y}>0$. Then, by the definition of $K_{x,y}$, for
any $\varepsilon>0$, there exists $t_\varepsilon>0$ such that
$t_\varepsilon>K_{x,y}-\varepsilon$ and $S_{t_\varepsilon}<
T_{x,y}^B$. Hence, $B^x_{S_{t_\varepsilon}}\neq
\hat{B}^y_{S_{t_\varepsilon}}$, i.e.\ $X^x_{t_\varepsilon}\neq
\hat{X}^y_{t_\varepsilon}$. Therefore, $T_{x,y}^X\ge
t_\varepsilon>K_{x,y}-\varepsilon$. Letting
    $\varepsilon\rightarrow0$, we get $T_{x,y}^X\ge K_{x,y}$, and thus \eqref{sub4} holds.

Now, according to \eqref{sub2}, for almost every $\omega$ we have
$T_{x,y}^B(\omega)<\infty$. Since the subordinator $S_t$ tends to
infinity as $t\to\infty$, there exists $\tau_0(\omega)<\infty$ such
that $S_{t}(\omega)\ge T_{x,y}^B(\omega)$ for all $t\geq\tau_0(\omega)$.
Therefore, \eqref{sub4} implies that $T_{x,y}^X\leq \tau_0 <\infty$.

\medskip

For any $x$, $y\in\R^d$ and $t>0$, by the classic coupling
inequality, \eqref{sub4} and \eqref{sub2},
$$\aligned \|P_t(x,\cdot)-P_t(y,\cdot)\|_{\var}
&\le2\Pp(T_{x,y}^X>t)\\
&=2\Pp(T^B_{x,y}>S_t)\\
&=2\int_0^\infty\Pp(T^B_{x,y}>s)\Pp(S_t\in
ds)\\
&\le \frac{|x-y|}{\sqrt{\pi}}\int_0^\infty\frac{1}{\sqrt{s}}\,\mu_t^S(ds).\endaligned$$
According to the fact that
$$\frac{1}{\sqrt{s}}=\frac{1}{\sqrt{2\pi}}\int_0^\infty
\frac{1}{\sqrt{r}}e^{-rs}dr,$$ we obtain
$$\int_0^\infty\frac{1}{\sqrt{s}}\,\mu_t^S(ds)=\frac{1}{\sqrt{2\pi}}\int_0^\infty\frac{1}{\sqrt{r}}
\int_0^\infty
e^{-rs}\mu_t^S(ds)dr=\frac{1}{\sqrt{2\pi}}\int_0^\infty\frac{1}{\sqrt{r}}e^{-tf(r)}dr,$$
which in turn gives us \eqref{th11}.

\medskip

Since the Bernstein function $f$ is strictly increasing, we can make a change of
variables to get
$$\int_0^\infty \frac{1}{\sqrt{r}}e^{-tf(r)}dr=\int_0^\infty
\frac{e^{-ts}}{\sqrt{f^{-1}(s})f'(f^{-1}(s))}ds=2\int_0^\infty
e^{-ts}d\sqrt{f^{-1}(s)}.$$ Suppose that $\liminf\limits_{r\rightarrow\infty} f(r)/\log r>0$ and \eqref{th1123} hold. Then, we can choose positive constants $c_i$ $(i=1,2,3)$ such that $f^{-1}(2x)\le c_1f^{-1}(x)$ if $x\in (0,2c_3]$; $f^{-1}(x)\le e^{c_2x}$ if $x\in [c_3, \infty)$. Thus, for $t>0$ large enough,
$$\aligned \int_0^\infty
e^{-ts}d\sqrt{f^{-1}(s)}&=e^{-ts}\sqrt{f^{-1}(s)}\,\bigg|_0^\infty+\int_0^\infty f^{-1}\Big(\frac{s}{t}\Big)e^{-s}ds\\
&=\int_0^\infty f^{-1}\Big(\frac{s}{t}\Big)e^{-s}ds.\endaligned$$  For any $s\in(1,c_3t]$, choose $k=[\log_2 s]+1.$ Since $f^{-1}$ is increasing, we find
$$f^{-1}\Big(\frac{s}{t}\Big)\le f^{-1}\Big(\frac{2^k}{t}\Big)\le c_1^kf^{-1}\Big(\frac{1}{t}\Big)\le 2^{k\rho} f^{-1}\Big(\frac{1}{t}\Big)\le(2s)^\rho f^{-1}\Big(\frac{1}{t}\Big),$$ where $\rho=\log_2 c_1$. Therefore,  for $t>0$ large enough,
$$\aligned \int_0^\infty f^{-1}\Big(\frac{s}{t}\Big)e^{-s}ds&=\bigg(\int_0^1+\int_1^{c_3t}+\int_{c_3t}^\infty\bigg)f^{-1}\Big(\frac{s}{t}\Big)e^{-s}ds\\
&\le f^{-1}\Big(\frac{1}{t}\Big)+2^\rho f^{-1}\Big(\frac{s}{t}\Big)\int_1^{c_3t} s^\rho e^{-s}ds+\int_{c_3 t}^\infty e^{-(s-c_2 s/t)}ds\\
&\le \bigg[1+2^\rho\int_1^\infty s^\rho e^{-s}ds\bigg]f^{-1}\Big(\frac{1}{t}\Big)+\int_{c_3t}^\infty e^{-s/2}ds\\
&\le C_1 f^{-1}\Big(\frac{1}{t}\Big) +2e^{-c_3t/2}.\endaligned$$ Since $\liminf_{r\rightarrow0} f(r)|\log r|<\infty$, there exist positive constants $c_4$ and $r_0$ such that for $r\le r_0$, $f(r)\le c_4\big/\log r^{-1}.$ Thus, for $t>0$ large enough, $f^{-1}(1/t)\ge \exp(-c_4t)$. According to the volume doubling property \eqref{th1123} again, we get that for $t>0$ large enough, $$f^{-1}(1/t)\ge c_5e^{-c_3t/2}.$$ This along with all the above conclusions above yields the required assertion.
 \end{proof}

Theorem \ref{th1} is easily generalized to study the coupling
property of L\'{e}vy processes, which can be decomposed into two
independent parts, one of which is a subordinate Brownian motion.
\begin{proposition}\label{couooo} Suppose that the L\'{e}vy process $X_t$ can be split into $$X_t=Y_t+{B^f_t},$$ where
$B^f_t$ is a Brownian motion subordinated by the subordinator
$S$ and $Y_t$ is a L\'{e}vy process. Let $P_t(x,\cdot)$ be the transition probability function of
$X_t$. Then, there exists a constant $C>0$ such that for $t>0$ and
$x$, $y\in\R^d$,
\begin{equation*}\|P_t(x,\cdot)-P_t(y,\cdot)\|_{\var}\le \bigg(\frac{|x-y|}{\sqrt{2}\pi}\int_0^\infty \frac{1}{\sqrt{r}}e^{-tf(r)}dr\bigg)\wedge \frac{C(1+|x-y|)}{\sqrt{t}}\wedge 2,\end{equation*}
where $f(\lambda)$ is the Bernstein function corresponding to $S$.
\end{proposition}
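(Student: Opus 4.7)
The plan is to reduce the estimate to the one already proved for subordinate Brownian motions in Theorem \ref{th1}, by coupling the $B^f_t$-component with the reflection--subordinate construction from Section \ref{section2} while running the $Y_t$-component synchronously in both marginals. Concretely, on one probability space I would realise four independent ingredients: a copy $(Y_t)_{t\ge0}$ of the L\'evy process $Y$ starting at $0$; a Brownian motion $(B^x_t)$ starting at $x$; its mirror reflection $(\hat B^y_t)$ starting at $y$ as constructed before \eqref{sub1}; and a subordinator $(S_t)$ with Bernstein function $f$, independent of $(B^x,\hat B^y)$. Setting
$$\tilde X^x_t := Y_t+B^x_{S_t},\qquad \tilde X^y_t := Y_t+\hat B^y_{S_t},$$
each coordinate has, by independence, the law of $X$ started from $x$ or $y$ respectively, so $(\tilde X^x_t,\tilde X^y_t)$ is a Markov coupling of $X_t$.

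Since the $Y$-terms coincide in the two coordinates, the coupling time $T$ of $(\tilde X^x_t,\tilde X^y_t)$ is dominated by the reflection--subordinate coupling time $T^X_{x,y}$ defined in \eqref{sub3}. Combining the classical coupling inequality with the tail estimate derived in Theorem \ref{th1} then gives
$$\|P_t(x,\cdot)-P_t(y,\cdot)\|_{\var}\le 2\Pp(T>t)\le 2\Pp(T^X_{x,y}>t)\le \frac{|x-y|}{\sqrt{2}\,\pi}\int_0^\infty \frac{1}{\sqrt{r}}\,e^{-tf(r)}\,dr,$$
which is the first term in the displayed minimum; the trivial bound $\|\cdot\|_{\var}\le 2$ supplies the last term.

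For the middle term $C(1+|x-y|)/\sqrt{t}$ I would simply invoke \eqref{1proff111} via \cite[Corollary 4.4]{RWJ}. The L\'evy measure of $X_t$ equals $\nu_Y+\nu_{B^f}$, and $\nu_{B^f}$ has density $\int_0^\infty(4\pi s)^{-d/2}e^{-|z|^2/(4s)}\,\mu(ds)$ with $\mu$ the L\'evy measure of $S$, hence is absolutely continuous whenever $\mu\neq 0$; in the degenerate case $S_t=bt$, $B^f_t$ is a scaled Brownian motion, so $X_t$ is strong Feller and \cite[Corollary 4.2]{RWJ} delivers the same bound. The only non-routine point in the whole argument is checking that $(\tilde X^x_t,\tilde X^y_t)$ is genuinely a Markov coupling and that $T\le T^X_{x,y}$; both follow immediately from the independence of $Y$ from $(B^x,\hat B^y,S)$, so I do not expect any real obstacle --- the proposition is essentially a bookkeeping combination of Theorem \ref{th1} with the Poissonian bound \eqref{1proff111}.
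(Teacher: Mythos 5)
Your proposal is correct, and it takes a more constructive route than the one in the paper. The paper avoids building an explicit coupling for $X_t$ altogether: it writes $P_t=P^f_tP^Y_t$ (possible because $Y$ and $B^f$ are independent, so the two convolution semigroups commute) and notes that $\|P^Y_th\|_\infty\le\|h\|_\infty$, whence
\[
\|P_t(x,\cdot)-P_t(y,\cdot)\|_{\var}
=\sup_{\|h\|_\infty\le1}\big|P^f_tP^Y_th(x)-P^f_tP^Y_th(y)\big|
\le\|P^f_t(x,\cdot)-P^f_t(y,\cdot)\|_{\var},
\]
after which it applies Theorem \ref{th1} and \cite[Theorem 4.3 and Corollary 4.4]{RWJ} to the \emph{subordinate Brownian motion} $B^f_t$. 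You instead realise the coupling on the level of sample paths: you add an independent, synchronously run copy of $Y$ to both coordinates of the reflection--subordinate coupling of $B^f$, and then invoke the coupling inequality together with the tail bound from Theorem \ref{th1}. This is exactly the probabilistic mechanism underlying the semigroup inequality (adding an independent process in parallel cannot increase the total variation distance), so the two arguments are morally equivalent; the paper's version is shorter, while yours makes the Markov coupling of $X_t$ explicit, which is more in the spirit of the paper's stated goal of \emph{constructing} coupling processes. A further small difference: for the middle term $C(1+|x-y|)/\sqrt t$ the paper applies \cite[Corollary 4.4]{RWJ} to $P^f_t$, whereas you apply it directly to $P_t$ via the fact that $\nu_X\ge\nu_{B^f}$ has an absolutely continuous part; you also take care of the degenerate case $\mu=0$ (so $B^f$ is just a scaled Brownian motion and one falls back on the strong Feller criterion), a point the paper glosses over. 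One cosmetic nit: you initially describe the four ingredients as ``independent'', though $\hat B^y$ is of course a deterministic functional of $B^x$; the construction that follows makes clear you understand the correct independence structure ($Y\perp(B^x,\hat B^y,S)$ and $S\perp(B^x,\hat B^y)$), so this is only a slip of wording. Finally, note that the coupling inequality does not actually need the pair to be a Markov process, only that the marginals are correct and the two coordinates coalesce permanently after $T$; the Markov property you mention is automatic here but not required for the bound.
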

\begin{proof} Let $P_t^{f}$ and $P^Y_t$ be the semigroups of ${B^f_t}$ and $Y_t$ respectively. Then,
\begin{align*}
    \|P_t(x,\cdot)-P_t(y,\cdot)\|_{\var}
    &= \sup_{\|f\|_\infty\le 1}\big|P_tf(x)-P_tf(y)\big|\\
    &=\sup_{\|f\|_\infty\le 1} \big|P_t^{f}P_t^Yf(x)-P_t^{f} P_t^Yf(y)\big|\\
    &\le \sup_{\|h\|_\infty\le 1} \big|P^{f}_th(x)-P^{f}_th(y)\big|\\
    &= \|P^{f}_t(x,\cdot)-P^{f}_t(y,\cdot)\|_{\var}.
\end{align*} Note that the L\'{e}vy measure of any subordinate
Brownian motion is absolutely continuous with respect to the Lebesgue measure.
According to \cite[Theorem 4.3 and Corollary 4.4]{RWJ}, there exists
a constant $C>0$ such that for any $t>0$, $x$, $y\in\R^d$,
$$\|P^{f}_t(x,\cdot)-P^{f}_t(y,\cdot)\|_{\var}\le
\frac{C(1+|x-y|)}{\sqrt{t}}\wedge 2.$$ Combining this with Theorem
\ref{th1} yields
\begin{equation*}\|P^{f}_t(x,\cdot)-P^{f}_t(y,\cdot)\|_{\var}\le\bigg(\frac{|x-y|}{\sqrt{2}\pi}\int_0^\infty \frac{1}{\sqrt{r}}e^{-tf(r)}dr\bigg)\wedge \frac{C(1+|x-y|)}{\sqrt{t}}\wedge 2.\end{equation*}
We have proved the desired assertion.\end{proof}

\begin{remark}\rm
Proposition \ref{couooo} can be stated in the following way: \emph{Let $\Phi(\xi)$ be the symbol of L\'{e}vy process $X_t$.
If $\Phi(\xi)= f(|\xi|^2)+\Psi(\xi)$, where $f$ is a
Bernstein function and $\Psi(\xi)$ is again a symbol of a L\'{e}vy
process, then the conclusion of Proposition \ref{couooo} holds.}\end{remark}

\section{Proof and Extension of Theorem \ref{section1th1}}\label{section3}
We begin with the proof of Theorem \ref{section1th1}.
\begin{proof}[Proof of Theorem \ref{section1th1}] We first suppose that the L\'{e}vy measure of $X_t$ satisfies
\begin{equation}\label{proof2233}\nu(dz)\ge c|z|^{-d}f(|z|^{-2})dz,\end{equation} where $$c=\bigg(\int_{\{|z|\le 1\}}(1-\cos z_1)|z|^{-d}dz\bigg)^{-1}.$$ Then, by the argument of \cite[Theorem 1.1, Part (a)]{W2},
$$X_t=X_t'+{B_t^{f}},$$ where ${B_t^{f}}$ is a
subordinated Brownian motion corresponding to the Bernstein function
$f$ and $X_t'$
is a L\'{e}vy process with symbol  $$
    \psi'(\xi)= \frac{1}{2}\langle Q\xi, \xi\rangle +i\langle b,\xi\rangle + \int_{z\neq 0}\!\! \Big( 1-e^{-i\langle \xi,z\rangle}+i\langle \xi,z\rangle\I_{\{|z|\le 1\}}\Big)\nu_{X'}(\xi,dz),
$$ where
$$\nu_{X'}(\xi,dz):=\nu(dz)- c|z|^{-d}f(|\xi|^{2})\I _{\{|z|\le |\xi|^{-1}\}}dz\ge0.$$ Therefore, Theorem
\ref{section1th1} is a consequence of Proposition \ref{couooo}.

 Next, we turn to consider the condition \eqref{1proff2}. Since $$1-\cos z_1\ge \frac{\cos 1}{2}z_1^2\qquad\textrm{ for }|z|\le 1,$$ we have $$\int_{|z|\le 1}(1-\cos z_1)|z|^{-d}dz\ge\frac{\cos 1}{2}\int_{|z|\le 1}|z_1|^2|z|^{-d}dz.$$ By symmetry, for $i=1, \cdots, d$,
$$\int_{|z|\le 1}|z_1|^2|z|^{-d}dz=\int_{|z|\le 1}|z_i|^2|z|^{-d}dz=\frac{1}{d}\int_{|z|\le 1}|z|^{-d+2}dz=\frac{c_d}{d},$$ where $c_d=\pi^{d/2}/\Gamma(d/2+1)$, i.e.\ the volume of the unit ball in $\R^d$. Therefore,
 $$\int_{|z|\le 1}(1-\cos z_1)|z|^{-d}dz\ge\frac{c_d\cos1}{2d}.$$ That is, $c\le {2d}/({c_d\cos1}).$ This combining with \eqref{1proff2} and \eqref{proof2233} gives us the required conclusion.
\end{proof}

Having Theorem \ref{section1th1} in mind, the following condition seems to be more natural: the L\'{e}vy measure $\nu$ has \emph{only around the
origin} an absolutely continuous component, i.e. there exists $r\in(0,\infty]$ such that
\begin{equation}\label{1proff211}\nu(dz)\ge |z|^{-d}f(|z|^{-2})\I_{\{|z|\le r\}}dz,\end{equation} where
$f$ is a Bernstein function. A similar lower bound condition has
already been used in \cite{W2} to study gradient estimates for
Ornstein-Ohlenbeck jump processes. According to \cite[Corollary
4.1]{RWJ}, cf.\ also the remark below Theorem \ref{section1th1}, we
know that under condition \eqref{1proff211}, the associated L\'{e}vy
process $X_t$ has the coupling property and \eqref{1proff111} holds.
However, the following example shows that assertion \eqref{1proff1}
is not satisfied.

\begin{example} Consider the truncated rotationally invariant stable L\'{e}vy process $X_t$ on $\R$ with
index $\alpha$. The corresponding L\'{e}vy measure is given by
$$\nu(dz)=\frac{c_{\alpha}}{|x|^{1+\alpha}}\I_{\{|z|\le 1\}},$$ where $c_{\alpha}$ is a constant depending only on $\alpha$.
Then, for any $x$, $y\in\R$ and $t>0$,
$$\|P_t(x,\cdot)-P_t(y,\cdot)\|_{\var}\asymp \frac{1}{\sqrt{t}}.$$ Indeed, on the one hand, by the remark below \eqref{1proff211}, there exists some $C_1>0$ such that for any $x$, $y\in\R$ and $t>0$, we have
$$\|P_t(x,\cdot)-P_t(y,\cdot)\|_{\var} \le\frac{C_1(1+|x-y|)}{\sqrt{t}}.$$ On the other hand, let $p_t(x,y)$ be the transition density function of $X_t$. According to \cite[Theorem 3.6]{CKTJ}, there exist $c_0$, $c_1$, $c_2$, $c_3$ and $c_4>0$ such that
\begin{equation}\label{WWWEEW}
    p_t(x,y)
   \ge
    \begin{cases}
        c_0t^{-1/2}& t\ge R_*^\alpha,\, |x-y|^2\le t;\\
       c_1\bigg(\frac{t}{|x-y|}\bigg)^{c_2|x-y|},& |x-y|\ge \max\{t/C_*, R_*\};\\
       c_3t^{-1/2}\exp\bigg(-\frac{c_4|x-y|^2}{t}\bigg),& C_*|x-y|\le t\le |x-y|^2,
    \end{cases}
\end{equation} where $R_*$ and $C_*$ are two positive constants.
Denote by $Z_t$ a truncated rotationally invariant stable L\'{e}vy process on $\R$
starting from $0$. Then, for any $x$, $y\in\R$ with $x<y$ and $t\ge |x-y|^2\wedge R_*^\alpha$,
$$\aligned\|P_t(x,\cdot)-P_t(y,\cdot)\|_{\var}\ge&\big|\Pp\big(Z_t+x\in (x,\infty)\big)-\Pp\big(Z_t+y\in (x,\infty)\big)\big|\\
=& \big|\Pp\big(Z_t\in (0,\infty)\big)-\Pp\big(Z_t\in (x-y,\infty)\big)\big|\\
=&\Pp\big(Z_t\in (x-y,0]\big)=\int_{x-y}^0p_t(0,z)dz\\
\ge&
\frac{c_0(y-x)}{t^{1/2}},\endaligned$$ where in the last inequality we have used \eqref{WWWEEW}. The required assertion follows.
  \end{example}

The following result is an analog of Theorem \ref{section1th1}.

\begin{theorem}\label{llsection1th1} Let $X_t$ be a L\'{e}vy process on $\R^d$ and $\nu$ be its L\'{e}vy measure. Assume that
\begin{equation}\label{ll1proff2}\nu(dz)\ge \sum_{i=1}^d\bigg(|z_i|^{-1}f_i(|z_i|^{-2})\I_{\{z_1=\cdots= z_{i-1}=z_{i+1}=\cdots= z_{d}=0\}}\bigg)dz,\end{equation} where the
$f_i$ are Bernstein functions. Then, there exists $C>0$ such that
for any $x$, $y\in\R^d$ and $t>0$,
$$\|P_t(x,\cdot)-P_t(y,\cdot)\|_{\var}\le 2\wedge
\sum_{i=1}^d\bigg[\Big(\frac{|x_i-y_i|}{\sqrt{2}\pi}\int_0^\infty
\frac{1}{\sqrt{r}}e^{-ctf_i(r)}dr\Big)\wedge
\frac{C(1+|x_i-y_i|)}{\sqrt{t}}\bigg],$$ where $c=\pi^{d/2}\cos
1\big/ (2d\Gamma(d/2+1))$.

\end{theorem}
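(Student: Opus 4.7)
The plan is to reduce the multi-dimensional estimate to $d$ one-dimensional estimates by splitting $X_t$ along coordinate axes. Condition \eqref{ll1proff2} says that $\nu$ dominates the singular sum $\sum_{i=1}^d \mu_i$, where $\mu_i$ is the measure on the $i$-th coordinate axis with one-dimensional density $|z|^{-1}f_i(|z|^{-2})$. Arguing as in the remark following Theorem \ref{section1th1}, each $f_i$ must have neither drift nor killing, so (after a change of variables) $|z|^{-1}f_i(|z|^{-2})dz$ defines a genuine one-dimensional L\'evy measure and $\nu - \sum_i \mu_i$ is still a non-negative L\'evy measure. The L\'evy--Khintchine formula then yields an equality in distribution
$$X_t \stackrel{d}{=} X_t^{(0)} + \sum_{i=1}^d X_t^{(i)},$$
with independent summands, where $X_t^{(i)}=(0,\ldots,0,\tilde{X}_t^{(i)},0,\ldots,0)$ for a one-dimensional L\'evy process $\tilde X_t^{(i)}$ with L\'evy measure $|z|^{-1}f_i(|z|^{-2})dz$, while $X_t^{(0)}$ carries the Gaussian part, the drift, and the residual jump measure $\nu - \sum_i \mu_i$.

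Next I would interpolate between $x$ and $y$ one coordinate at a time. Set $x^{(0)}:=x$ and $x^{(i)}:=(y_1,\ldots,y_i,x_{i+1},\ldots,x_d)$, so $x^{(d)}=y$ and consecutive points differ only in the $i$-th coordinate. The triangle inequality for the total variation norm gives
$$\|P_t(x,\cdot)-P_t(y,\cdot)\|_{\var}\le \sum_{i=1}^d\|P_t(x^{(i-1)},\cdot)-P_t(x^{(i)},\cdot)\|_{\var}.$$
For each summand I would apply the argument in the proof of Proposition \ref{couooo}: writing $X_t=X_t^{(i)}+R_t^{(i)}$ with $R_t^{(i)}:=X_t^{(0)}+\sum_{j\neq i}X_t^{(j)}$ independent of $X_t^{(i)}$ and factoring the semigroup as $P_t=P_t^{X^{(i)}}P_t^{R^{(i)}}$, one obtains
$$\|P_t(x^{(i-1)},\cdot)-P_t(x^{(i)},\cdot)\|_{\var}\le \|P_t^{X^{(i)}}(x^{(i-1)},\cdot)-P_t^{X^{(i)}}(x^{(i)},\cdot)\|_{\var}.$$
Because $x^{(i-1)}$ and $x^{(i)}$ agree off the $i$-th coordinate and $X_t^{(i)}$ moves only along that coordinate, the latter total variation is equal to the one-dimensional total variation $\|P_t^{\tilde X^{(i)}}(x_i,\cdot)-P_t^{\tilde X^{(i)}}(y_i,\cdot)\|_{\var}$.

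To finish, I would invoke Theorem \ref{section1th1} in dimension $d=1$: the L\'evy measure of $\tilde X_t^{(i)}$ is precisely $|z|^{-1}f_i(|z|^{-2})dz$, which is the one-dimensional instance of \eqref{1proff2} for the Bernstein function $f_i$, hence
$$\|P_t^{\tilde X^{(i)}}(x_i,\cdot)-P_t^{\tilde X^{(i)}}(y_i,\cdot)\|_{\var}\le \bigg(\frac{|x_i-y_i|}{\sqrt{2}\pi}\int_0^\infty \frac{1}{\sqrt{r}}e^{-c_1 t f_i(r)}dr\bigg)\wedge \frac{C(1+|x_i-y_i|)}{\sqrt{t}}\wedge 2.$$
The one-dimensional constant equals $c_1 = \cos 1$, which is larger than $c=\pi^{d/2}\cos 1/(2d\Gamma(d/2+1))$ for $d\ge 2$, so replacing $c_1$ by $c$ only weakens the estimate. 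Summing over $i=1,\ldots,d$ and combining with the trivial upper bound of $2$ produces the inequality claimed in Theorem \ref{llsection1th1}.

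The only genuinely delicate point is the justification of the independent decomposition $X_t=X_t^{(0)}+\sum_i X_t^{(i)}$, since the lower bound in \eqref{ll1proff2} is a singular measure supported on a Lebesgue-null subset of $\R^d$; one has to read the inequality as an inequality of $\sigma$-finite measures (so that $\nu$ itself must carry a singular part on the coordinate axes) and to check, via the change of variables $r=|z|^{-2}$, that $\int(1\wedge z^2)|z|^{-1}f_i(|z|^{-2})dz<\infty$, which reduces to $\int_0^1 f_i(r)/r\,dr+\int_1^\infty f_i(r)/r^2\,dr<\infty$ exactly as in the discussion following Theorem \ref{section1th1}. Once this bookkeeping is in place, the proof is just the triangle inequality applied to the semigroup factorisation followed by Theorem \ref{section1th1} in one dimension.
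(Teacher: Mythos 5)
Your proof is correct, but it takes a recognisably different route from the paper's. The paper works directly with the $d$-dimensional jump process $Y_t=(L^{(1)}_t,\ldots,L^{(d)}_t)$: it couples each coordinate process $L^{(i)}$ by the reflection-subordinate coupling, assembles these into a coupling $(Y,Y')$, invokes the decomposition lemma \cite[Lemma 4.18]{CL} to identify the coupling time of $(Y,Y')$ with $\max_i T^{(i)}_{x_i,y_i}$, applies the union bound $\Pp(\max_iT^{(i)}>t)\le\sum_i\Pp(T^{(i)}>t)$, and finally passes from $Y$ to $X$ via the semigroup-factorisation argument of Proposition~\ref{couooo}. You instead interpolate along a chain of box corners $x=x^{(0)},x^{(1)},\ldots,x^{(d)}=y$, use the triangle inequality for the total-variation norm, and then for each consecutive pair factorise $P_t=P^{X^{(i)}}_tP^{R^{(i)}}_t$ to isolate the $i$-th coordinate, reducing directly to the $d=1$ instance of Theorem~\ref{section1th1}. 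This avoids constructing an explicit $d$-dimensional coupling and bypasses the decomposition lemma entirely, trading a probabilistic union bound for the analytic triangle inequality; both routes yield the same sum over $i$ and the same constants (your observation that the one-dimensional $c_1=\cos 1$ dominates the stated $c$ for $d\ge 2$ is correct). Your remark about reading \eqref{ll1proff2} as an inequality of $\sigma$-finite measures on $\R^d\setminus\{0\}$ and checking the one-dimensional integrability $\int(1\wedge u^2)|u|^{-1}f_i(|u|^{-2})\,du<\infty$ is exactly the bookkeeping the paper relies on implicitly; so there is no gap, just a cleaner (if slightly less constructive, since no explicit coupling of $X$ is produced) alternative.
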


Unlike \eqref{1proff2} in Theorem \ref{section1th1},
\eqref{ll1proff2} is a condition on the L\'{e}vy measure restricted
on the coordinate axes. Here, we mention one significant example
which satisfies \eqref{ll1proff2} (but not \eqref{1proff2}).

\begin{example}\rm Set $L=(L^{(1)},
\cdots,L^{(d)})$, where $L^{(1)},$  $L^{(2)},$ $\cdots$ $L^{(d)}$
are independent L\'{e}vy processes on $\R$. The L\'{e}vy
measure $\nu$ of $L$ is concentrated on the coordinate
axes. Assume that $\nu$ has the following density
$$\sum_{i=1}^d\Bigg(\I_{\{z_1=\cdots= z_{i-1}=z_{i+1}=\cdots=
z_{d}=0\}}\frac{c_i}{|z|_i^{1+\alpha}}\Bigg)dz,$$ where $c_i>0$
$(1\le i\le d)$ are constants. (Note that this measure is
\textit{more} singular than the standard rotationally invariant
$\alpha$-stable L\'{e}vy process.) Then, there exists a constant
$C>0$ such that for any $t>0$,
$$\|P_t(x,\cdot)-P_t(y,\cdot)\|_{\var}\le \frac{C|x-y|}{t^{1/\alpha}},$$ where $P_t(x,\cdot)$ is the transition function of $L$.\end{example}

\begin{proof} [Proof of Theorem \ref{llsection1th1}] Under condition \eqref{ll1proff2}, we can split the L\'{e}vy process $X_t$ into
$$X_t=Y_t+Z_t,$$ where $Y_t$ is a pure L\'{e}vy jump process with L\'{e}vy measure
$$\nu_{Y}(dz):= \sum_{i=1}^d\bigg(|z_i|^{-1}f_i(|z_i|^{-2})\I_{\{z_1=\cdots= z_{i-1}=z_{i+1}=\cdots= z_{d}=0\}}\bigg)dz.$$ $Z_t$ is independent of $Y_t$, and it has the
L\'{e}vy measure
$$\nu_{Z}(dz):=\nu(dz)- \sum_{i=1}^d\bigg(|z_i|^{-1}f_i(|z_i|^{-2})\I_{\{z_1=\cdots= z_{i-1}=z_{i+1}=\cdots= z_{d}=0\}}\bigg)dz\ge0.$$ According to the definition of $\nu_{Y}$, the generator
of $Y$ is
$$L_{Y}h(x)=\sum_{i=1}^d\int_{\R}\!\!\Big(h(x+ue_i)-h(x)-\I_{_{\{|u|\le 1\}}}u\partial_{x_i}h(x)\Big)|u|^{-1}f_i(|u|^{-2})du,$$ where $h\in C_b^2(\R^d)$ and $e_i$ is the canonical basis
in $\R^d$. Therefore,
$$Y_t=(L^{(1)},
\cdots,L^{(d)}),$$ where $L^{(1)},$  $L^{(2)},$ $\cdots$ $L^{(d)}$
are independent one-dimensional L\'{e}vy processes with L\'{e}vy
measures
$$\nu_{L^{(i)}}(du):=|u|^{-1}f_i(|u|^{-2})du,\quad i=1,\cdots, d,$$ respectively.

Following the proofs of Theorems \ref{section1th1} and \ref{th1},
for $1\le i\le d$, there exists a coupling $(L^{(i)}, L^{'(i)})$ of
$L^{(i)}$ such that the coupling time $T^{(i)}_{x_i,y_i}$ (starting
from $x_i$ and $y_i$) satisfies
$$\Pp(T^{(i)}_{x_i,y_i}>t)\le \Big(\frac{|x_i-y_i|}{2\sqrt{2}\pi}\int_0^\infty
\frac{1}{\sqrt{r}}e^{-ctf_i(r)}dr\Big)\wedge
\frac{C(1+|x_i-y_i|)}{\sqrt{t}}$$ for some constant $C>0$ (which can
be chosen independently of $i$). In particular, the part $(Y, Y')$
with $Y'=(L^{'(1)}, \cdots,L^{'(d)})$ is a coupling of $Y$. Denote
by $T_{x,y}$ the coupling time of $(Y, Y')$. Then, due to the
independence of $L^{(1)},$ $L^{(2)},$ $\cdots$ $L^{(d)}$, we find
that (see \cite[Decomposition Lemma 4.18]{CL})
$$T_{x,y}=\max_{1\le i\le d}T^{(i)}_{x_i,y_i}.$$
Let $P^{Y}_t$ be the semigroup of $Y$. Therefore, for any $x$,
$y\in\R^d$,
$$\aligned \|P_t^{Y}(x,\cdot)-P_t^{Y}(y,\cdot)\|_{\var}\le &2\Pp\big(\max_{1\le i\le d}T^{(i)}_{x_i,y_i}>t\big)\\
\le&
\sum_{i=1}^d\bigg[\Big(\frac{|x_i-y_i|}{\sqrt{2}\pi}\int_0^\infty
\frac{1}{\sqrt{r}}e^{-ctf_i(r)}dr\Big)\wedge
\frac{2C(1+|x_i-y_i|)}{\sqrt{t}}\bigg],\endaligned$$ where
$x=(x_1,x_2,\cdots,x_d)$ and $y=(y_1,y_2,\cdots,y_d).$ Since
$$\|P_t(x,\cdot)-P_t(y,\cdot)\|_{\var}\le\|P_t^{X'}(x,\cdot)-P_t^{X'}(y,\cdot)\|_{\var},$$ as in the proof
of Proposition \ref{couooo}, we are done.
\end{proof}

We finally turn to the proof of Proposition \ref{propo}.

\begin{proof}[Proof of Proposition \ref{propo}] We assume that $c=1$
for simplicity. Let $(S_t)_{t\ge0}$ be a subordinator associated
with the Bernstein function $f$. For any $t\ge0$, let $\mu_t^S$ be
the transition probabilities of the subordinator $S$, i.e.
$\mu_t^S(B)=\Pp(S_t\in B)$ for any $B\in\Bb([0,\infty))$, and
$\Ee^S$ be its expectation. Then,
$$\aligned \int_0^\infty\frac{1}{\sqrt{s}}e^{-t f(s)}ds&=\int_0^\infty\frac{1}{\sqrt{s}}\int_0^\infty e^{-sr}\mu^S_t(dr)ds\\
&=\int_0^\infty\bigg(\int_0^\infty\frac{1}{\sqrt{s}}e^{-sr}ds\bigg)\mu^S_t(dr)\\
&=\int_0^\infty \frac{e^{-u}}{\sqrt{u}}du\int_0^\infty\frac{1}{\sqrt{r}}\mu^S_t(dr)\\
&=\Ee^S \bigg(\frac{\sqrt{2\pi}}{\sqrt{S_t}}\bigg).\endaligned$$
By using the Cauchy-Schwarz inequality twice, we arrive at
$$\aligned \int_0^\infty\frac{1}{\sqrt{s}}e^{-t f(s)}ds&\ge\frac{\sqrt{2\pi}}{\Ee^S \sqrt{S_t}}\ge \frac{\sqrt{2\pi}}{\sqrt{\Ee^S S_t}} .\endaligned$$
Since $S_t$ is a L\'{e}vy process starting from $0$, we easily see that
$\Ee^S S_t= t \Ee^S S_1$, which yields that
\begin{equation}\label{ppropo2} \int_0^\infty\frac{1}{\sqrt{s}}e^{-t f(s)}ds \ge \frac{\sqrt{2\pi}}{\sqrt{t\Ee^S S_1}}. \end{equation}

We claim that $\Ee^S S_1<\infty$ if and only if $f'(0+)<\infty.$ In fact, for any $x>0$, $\Ee^S e^{-xS_1}=e^{-f(x)}$. Then, $\Ee^S\big( S_1e^{-xS_1}\big)=f'(x)e^{-f(x)}.$ Letting $x\rightarrow0$, by the monotone convergence theorem and the definition of Bernstein function $f$, we have
$\Ee^S S_1=f'(0+).$ The desired assertion follows. Therefore, if $f'(0+)<\infty$, then, due to \eqref{ppropo2}, there exists a finite constant $C_1>0$ such that \begin{equation}\label{ppropo3}\int_0^\infty\frac{1}{\sqrt{s}}e^{-t f(s)}ds \ge \frac{C_1}{\sqrt{t}}.\end{equation}
The proof is completed by \eqref{ppropo3} and Theorem \ref{section1th1}.
\end{proof}

\bigskip

\noindent{\bf Acknowledgement.} Financial support through DFG (grant
Schi 419/5-1) and DAAD (PPP Kroatien) (for Ren\'{e} L.\ Schilling)
and the Alexander-von-Humboldt Foundation and the Natural Science
Foundation of Fujian $($No.\ 2010J05002$)$ (for Jian Wang) is
gratefully acknowledged. The authors would like to thank Prof.\ Z.
Vondra\v{c}ek and the referee for a number of corrections and
comments on earlier version of the paper.

\end{document}